\newcommand{\Prob}[1]{\mathbb{P}\left(#1\right)}
\newcommand{\E}[1]{\mathbb{E}\left[#1\right]}
\newcommand{\Var}[1]{\mathrm{Var}\left(#1\right)}
\newcommand{\mcal}[1]{\mathcal{#1}}
\newcommand{\Ind}[1]{I\!\left\{ #1 \right\}}
\newtheorem{corollary}{Corollary}[section]
\newtheorem{definition}{Definition}[section]
\newtheorem{theorem}{Theorem}[section]
\newtheorem{proposition}{Proposition}[section]
\newtheorem{lemma}{Lemma}[section]
\theoremstyle{remark}
\newtheorem{example}{Example}[section]
\theoremstyle{remark}
\newtheorem{remark}{Remark}[section]
\title{The semi-Markov beta-Stacy process: a Bayesian non-parametric prior for semi-Markov processes.}
\author{Andrea Arf\`e\footnote{Harvard-MIT Center for Regulatory Science, Harvard Medical School, 02155 Boston, Massachusetts.} , Stefano Peluso\footnote{Department of Statistics and Quantitative Methods, University of Milano-Bicocca, 20126 Milan, Italy.} , 
Pietro Muliere\footnote{Department of Decision Sciences, Bocconi University, 20136 Milan, Italy.}}
\begin{document}
\maketitle

\begin{abstract}
The literature on Bayesian methods for the analysis of discrete-time semi-Markov processes is sparse. In this paper, we introduce the \emph{semi-Markov beta-Stacy process}, a stochastic process useful for the Bayesian non-parametric analysis of semi-Markov processes. The semi-Markov beta-Stacy process is conjugate with respect to data generated by a semi-Markov process, a property which makes it easy to obtain probabilistic forecasts. Its predictive distributions are characterized by a reinforced random walk on a system of urns.

\textbf{Running head:} Semi-Markov beta-Stacy. 

\textbf{Keywords:} Bayesian nonparametric; semi-Markov; beta-Stacy; reinforced processes; urn model. 
\end{abstract}

\section{Introduction}

Discrete-time Semi-Markov processes generalize Markov chains by allowing the \emph{holding times}, the times spent in each visited state, to have distributions other than the geometric \citep{Cinlar1969}. We address how to perform inferences and predictions for these processes from a Bayesian non-parametric perspective.

Semi-Markov processes are used to predict many phenomena. Applications include time-series and longitudinal data analysis \citep{Bulla2006}, survival analysis and reliability \citep{Barbu2004, Mitchell2011}, finance and actuarial sciences \citep{Janssen2007}, and biology \citep{Barbu2009}. 


Despite their usefulness (and in contrast with the continuous-time case; c.f. \citealp{Phelan1990, Bulla2007, Zhao2013}), the literature on inferential approaches for discrete-time semi-Markov processes is sparse \citep[Chapter 4]{Barbu2009}. The current literature focuses on processes with a finite state space. From the frequentist perspective, \citet{Satten1999} and \citet{Barbu2009} study non-parametric estimators of the transition probabilities and the holding times distributions. From the Bayesian perspective, specific parametric models have been used in different settings \citep{Patwardhan1980,Schiffman2007,Masala2013,Mitchell2011}, but no general non-parametric approach has been developed.  

We introduce the \emph{semi-Markov beta-Stacy process}, a non-parametric prior for the Bayesian analysis of semi-Markov models. The semi-Markov beta-Stacy can be used both for processes with a finite or countably infinite state space. We will show that this prior is conjugate with respect to the observation of one or more processes for a fixed time - a property that facilitates inferences and predictions for semi-Markov processes. 

The one-step-ahead predictive laws of the semi-Markov beta-Stacy process are equal to the transition kernels of a \emph{reinforced semi-Markov process}. This new process is the discrete-time analogue of that of \citet{Muliere2003} and \citet{Bulla2007}. Here, ``reinforcement'' means that each time a state is visited, this becomes more likely to be visited again \citep{Coppersmith1986, Pemantle1988,Pemantle2007}. 

We characterize the semi-Markov beta-Stacy process using a \emph{reinforced urn process}, i.e. a random walk over a system of reinforced urns. In such processes, whenever the  walk visits an urn, a ball is extracted and replaced by additional balls of the same color. The random walk then jumps to another urn determined by the extracted color. These processes are increasingly being used to construct nonparametric priors for a wide range of stochastic models \citep{Blackwell1973, Doksum1974, Mauldin1992, Walker1997, Muliere2000,Muliere2003, Bulla2007,Fortini2012,Peluso2015, Caron2017, Arfe2018a}.


Before proceeding, we introduce some notation. First, if $F$ is a non-decreasing function on the integers (adjoined with the $\sigma$-algebra of all subsets), then the symbol $F$ will also represent the associated measure. Hence, for example, $F(b)-F(a)=F((a,b])$ for all $a<b$, where $(a,b]$ is the set of all integers $x$ such that $a<x\leq b$. In particular, if $F$ is a probability distribution, we will use the same symbol to denote the corresponding cumulative distribution function, i.e. we set $F(x)=F((-\infty, x])$ for all integers $x$. Second, if $x=(x_1,x_2,\ldots)$ is a finite or infinite sequence, we denote with $x_{a:b}$ either the subsequence $(x_a,\ldots,x_b)$ of length $b-a+1$ if $a\leq b$, or the empty sequence of length $0$ if $a>b$. Lastly, empty sums and products are respectively defined to equal 0 and 1. 

The paper is structured as follows. In Section \ref{sec:semimarkov} we define discrete-time semi-Markov processes. In Section \ref{sec:smbs} we introduce the semi-Markov beta-Stacy process prior. In Section \ref{sec:post} we derive the corresponding posterior distributions and show that this process prior is conjugate. In Section \ref{sec:pred} we introduce reinforced semi-Markov process and show that these describe the predictive distributions of the semi-Markov beta-Stacy process prior. In Section \ref{sec:urns} we characterize the semi-Markov beta-Stacy process using a system of reinforced urns. In Section \ref{sec:applications} we illustrate the semi-Markov beta-Stacy process prior in a simulation study. Lastly, in Section \ref{sec:discussion} we provide some concluding remarks. 

\section{Semi-Markov processes: definition and basic properties} \label{sec:semimarkov}

Let $E$ be a non-empty finite or countably infinite set. Let $\mathbf{P}=(P^{i,j})_{i,j\in E}$ be a transition matrix on $E$ such that $P^{i,i}=0$ for all $i\in E$ and let $\mathbf{F}=(F^i(\cdot):i\in E)$ be a collection of probability distributions on the set of positive integers. Fix a $l_0$ in $E$.

\begin{definition}\label{def:renewal}
A \emph{Markov renewal process} starting at $l_0$ is a stochastic process $(L,T)=(L_n,T_n)_{n\geq 0}$ such that $\Prob{L_0=l_0|(\mathbf{P},\mathbf{F})} = 1$ and 
\begin{equation*}
\Prob{L_{n+1}=j,T_{n}\leq t | L_n=i,L_{0:n-1},T_{0:n-1},(\mathbf{P},\mathbf{F})} = F^{i}(t)P^{i,j}
\end{equation*}
for all integers $n\geq 0$, $t\geq 1$, and all $i,j\in E$. The pair $(\mathbf{P},\mathbf{F})$ is the \emph{characteristic couple} of $(L,T)$. Suppressing the dependence on $l_0$, we write $(L,T)\sim MR(\mathbf{P},\mathbf{F})$.  
\end{definition}

If $(L,T)\sim MR(\mathbf{P},\mathbf{F})$, define $\tau_0=0$ and $\tau_{n+1}=\sum_{h=0}^n T_h$ for all $n\geq 0$. Also let $N(t)=\sum_{n=1}^{+\infty}\Ind{\tau_n \leq t}$ for all integers $t\geq 0$.

\begin{definition}\label{def:semimark}
The process $(S_t)_{t\geq 0}$ defined by $S_t=L_{N(t)}$ is the \emph{semi-Markov Process} associated with $(L,T)$, $S=(S_t)_{t\geq 0}\sim SM(\mathbf{P},\mathbf{F})$ in symbols. The times $(\tau_n)_{n\geq 1}$ are the \emph{jump times} of $S$.
\end{definition}

A semi-Markov process $(S_t)_{t\geq 0}$ describes the evolution of some system as it goes through different states. The elements of $E$ represent the possible states. Additionally, $S_t$ is the state occupied at time $t$, $N(t)$ is the number of state changes occurred up to time $t$, $\tau_n$ is the time of the $n$-th state change, and $T_k$ is the length of time the system spends in its $k$-th state. These interpretations are possible because  $P^{i,i}=0$ for all $i\in E$ implies that $L_k\neq L_{k+1}$ for all $k$ with probability 1.

\begin{example}\label{ex:factory}
\citet[Sections 3.4]{Barbu2009} use a semi-Markov model to describe the operation of a factory. The factory waste is treated in a disposal unit. If the disposal unit fails, waste is stored in a tank. If the disposal unit is repaired before the tank is full, the factory continues to operate and the tank is purged. Otherwise, the factory stops and some time is necessary to restart it. The state space is thus $E=\{1,2,3\}$: $1$ represents that factory is operational, $2$ represents that the disposal unit is malfunctioning but the factory is still operational, and $3$ represents that the factory is stopped. Hence, $P^{1,3}=P^{3,2}=0$. Moreover, $F^{1}(\cdot)$ is the distribution of the time until the next disposal unit failure, $F^{2}(\cdot)$ that of the time until a malfunctioning disposal unit is either restored or when it fully breaks down, and $F^{3}(\cdot)$ that needed to restart the factory.
\end{example}

\begin{remark}\label{rem:equivSM}
Denote $l(t)=\max\{k=0,1,\ldots,t : S_t=S_{t-1}=\cdots=S_{t-k}\}$ 
the time spent by $S$ in the state $S_t$ just prior to time $t$. If $N(t)=n$, to observe $S_{0:t}$ is the same as to know $L_{0:n}$, $T_{0:n-1}$, and that $T_{n}> l(t)$. The sequence $L_{0:n}$ is the collection of the distinct states in $S_{0:t}$ in order of appearance. Instead, $T_{0:n-1}$ is determined by the position of the state changes in the sequence $S_{0:t}$. The value of $T_n$ is \emph{censored}, as it is only known to exceed a known threshold \citep{Kalbfleisch2002}.
\end{remark}

\begin{example}\label{ex:equivSM}
Observing $S_{0:5}=(i_0,i_0,i_1,i_2,i_2,i_2)$ for some distinct $i_0,i_1,i_2\in E$ is equivalent to observing $N(5)=2$, $L_0=i_0$, $L_1=i_1$, $L_2=i_2$, $\tau_1=2$, $\tau_2=3$, $\tau_3>5$, $l(5)=2$, $T_0=2$, $T_1=1$, and $T_2\geq 3$, i.e. $T_2>2=l(5)$.
\end{example}

Note that, since $P^{i,i}=0$ and $F^i$ has support on the positive integers for all $i\in E$, the semi-Markov process $S$ cannot have absorbing states, i.e. states such that $S_t=i$ for all sufficiently large $t\geq 0$ with positive probability. This assumption simplifies our analysis, although it might be restrictive for some applications. An absorbing state $i$ could be allowed by letting $P^{i,i}=1$ and $F^i(\{+\infty\})=1$. With additional effort, the results in the following sections could be extended to this case as well. 

\section{The semi-Markov beta-Stacy process prior} \label{sec:smbs}

The realizations of a semi-Markov beta-Stacy process are random characteristic couples $(\mathbf{P},\mathbf{F})$. Thus, the semi-Markov beta-Stacy process is a nonparametric prior distribution on $(\mathbf{P},\mathbf{F})$ \citep{Ferguson1973}. To define it, we will separately assign a prior distribution to i) each holding time distribution $F^i$ and ii) the transition matrix $\mathbf{P}$. 

We first consider the discrete-time beta-Stacy process of \citet{Walker1997}, a common Bayesian nonparametric prior for time-to-event distributions \citep{Singpurwalla2006, Bulla2007, Arfe2018a}. The beta-Stacy process will be used as the prior for the holding time distributions $F^i$.

\begin{definition}[\citet{Walker1997}]\label{def:betastacy}
Let $c(t)>0$ all integers $t>0$. Also let $F_0$ be a probability distributions with support on the positive integers. A random distribution $F$ is a \emph{beta-Stacy process} $BS(c,F_0)$ if there is a sequence $(U_t)_{t\geq 1}$ of independent random variables such that i) $U_t\sim Beta(c(t)F_0(\{t\}),c(t)F_0((t,+\infty)))$ for all integers $t\geq 1$ and ii) $F((t,+\infty))=\prod_{k=1}^{t}(1-U_k)$ for all integers $t\geq 0$.
\end{definition}

\begin{remark}
If $F\sim BS(c,F_0)$, then $\E{F(t)}=F_0(t)$ and $\Var{F(t)}$ is a decreasing function of $c(t)$ such that $\Var{F(t)}\rightarrow 0$ as $c(t)\rightarrow +\infty$. Hence $F_0$ is the mean of the process, while $c$ controls its dispersion \citep{Walker1997}. 
\end{remark}

The beta-Stacy process is conjugate, i.e. the distribution of $F\sim BS(c,F_0)$ conditional on a sample of exact observations from $F$ is again a beta-Stacy process. The beta-Stacy process is also conjugate with respect to censored observations (c.f. Remark \ref{rem:equivSM}). These properties are summarized in the following Proposition, which is a special case of Theorem 1 of \citet{Walker1997}.  
\begin{proposition}[\citet{Walker1997}]\label{prop:postbetastacy}
If $F\sim BS(c,F_0)$ and $T_1,\ldots,T_n$ are independently distributed according to $F$, then the posterior distribution of $F$ given $T_1,\ldots,T_n$ is $BS(c_*,F_*)$, where
\begin{align*}
F_*((t,+\infty)) &= \prod_{s=1}^t \left[ 1-\frac{c(s)F_0(\{s\})+N(\{s\})}{c(s)F_0([s,+\infty))+N([s,+\infty))} \right] \\
c_*(t) &= \frac{c(t)F_0((t,+\infty))+N((t,+\infty))}{F_*((t,+\infty))}.
\end{align*}
where $N(t)=\sum_{i=1}^n\Ind{T_i\leq t}$. Instead, the posterior distributions of $F$ given $T_n>t^*$ (i.e. a censored observation), where $t^*$ is a fixed constant, is $BS(c_*,F_*)$, where now
\begin{align*}
F_*((t,+\infty)) &= \prod_{s=1}^t \left[ 1-\frac{c(s)F_0(\{s\})}{c(s)F_0([s,+\infty))+\Ind{t^*\geq s}} \right], \\
c_*(t) &= \frac{c(t)F_0((t,+\infty))+\Ind{t^*\geq s}}{F_*((t,+\infty))}.
\end{align*} 
\end{proposition}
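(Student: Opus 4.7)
The plan is to reduce the posterior calculation for $F$ to a collection of independent beta--binomial conjugate updates for the sequence $(U_t)_{t\geq 1}$ used to construct $F$ in Definition \ref{def:betastacy}. Since the law of $F\sim BS(c,F_0)$ is completely determined by the joint law of the $(U_t)$, it suffices to find the posterior distribution of each $U_t$ and then recast the resulting process as a beta-Stacy with updated parameters.

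First I would observe that $F(\{t\}) = F((t-1,+\infty)) - F((t,+\infty)) = U_t\prod_{k=1}^{t-1}(1-U_k)$, so the likelihood under exact sampling of $T_1,\ldots,T_n$ is
\begin{equation*}
L(U_1,U_2,\ldots) = \prod_{i=1}^n F(\{T_i\}) = \prod_{t\geq 1}\left[ U_t\prod_{k=1}^{t-1}(1-U_k)\right]^{N(\{t\})}.
\end{equation*}
Rearranging the double product so that each $(1-U_k)$ collects the exponent $\sum_{t>k}N(\{t\}) = N((k,+\infty))$ gives the factorised form
\begin{equation*}
L = \prod_{t\geq 1} U_t^{N(\{t\})}\,(1-U_t)^{N((t,+\infty))}.
\end{equation*}
Since the prior on $(U_t)$ is a product of independent $Beta(c(t)F_0(\{t\}),\,c(t)F_0((t,+\infty)))$ laws, standard beta--binomial conjugacy immediately yields that, a posteriori, the $U_t$ remain independent with $U_t\mid T_{1:n} \sim Beta(c(t)F_0(\{t\})+N(\{t\}),\, c(t)F_0((t,+\infty))+N((t,+\infty)))$.

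To identify this posterior law as a beta-Stacy $BS(c_*,F_*)$, I solve the parameter-matching equations $c_*(t)F_*(\{t\}) = c(t)F_0(\{t\}) + N(\{t\})$ and $c_*(t)F_*((t,+\infty)) = c(t)F_0((t,+\infty)) + N((t,+\infty))$; the telescoping identity $F_*((t,+\infty)) = \prod_{s=1}^{t}(1-\E{U_s\mid T_{1:n}})$ then produces the product formula stated for $F_*$, and the ratio of the second equation to $F_*((t,+\infty))$ delivers $c_*$. The censored case proceeds identically: the likelihood reduces to $\Prob{T_n > t^*\mid F} = \prod_{k=1}^{t^*}(1-U_k)$, so only the $U_k$ with $k\leq t^*$ have their second beta parameter incremented by $1$, which is exactly what the indicator $\Ind{t^*\geq s}$ encodes. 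The only subtlety I anticipate is checking that $F_*$ is a proper distribution on the positive integers so that $BS(c_*,F_*)$ is well defined; this follows because $F_0$ is proper and the data contribution is a finite counting measure, forcing $F_*((t,+\infty))\to 0$ as $t\to\infty$.
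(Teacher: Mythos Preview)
Your argument is correct: factorising the likelihood through the hazard variables $U_t$, applying beta--binomial conjugacy coordinatewise, and then solving the two parameter-matching equations is exactly the standard route to this result, and your check that $F_*$ is a bona fide distribution function (via the eventual telescoping back to $F_0$) closes the only potential gap.

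As for comparison with the paper: the paper does not actually prove Proposition~\ref{prop:postbetastacy}. It is stated as a specific case of Theorem~1 of \citet{Walker1997} and left without argument. Your derivation is essentially the one Walker gives in the discrete-time setting, so there is no methodological divergence to discuss; you have simply supplied the proof that the paper omits by citation.
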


To specify a prior on the transition matrix $\textbf{P}$ we will use the Dirichlet process, a common non-parametric process prior for probability measures \citet{Ferguson1973, Hjort2010}. Since the $i$-the row ${P}^i=(P^{i,j})_{j\in E}$ of $\mathbf{P}$ is the measure ${P}^i(\cdot)$ on $E$ defined by ${P}^i(\{j\})=P^{i,j}$ for all $j\in E$, this can be assigned a Dirichlet process prior. 

\begin{definition}[\citet{Ferguson1973}]
Let $m$ be a measure on $E$ such that $0<m(E)<+\infty$. A random probability measure $P$ on $E$ is a \emph{Dirichlet process} with \emph{base measure} $m$, or $P\sim Dir(m)$ in symbols, if for every partition $A_1,\ldots,A_n$ of $E$ it holds that 
$$(P(A_1),\ldots,P(A_n))\sim Dirichlet(m(A_1),\ldots,m(A_n)).$$
\end{definition}
\begin{remark}\label{rem:dirichlet}
If $P\sim Dir(m)$, then $P(A)\sim Beta(m(A),m(A^c))$ for all $A\subseteq E$. Hence, $\E{P(A)}=m(A)/m(E)$ and $\Var{P(A)}
\rightarrow 0$ as $m(E)\rightarrow +\infty$. Additionally, if $m(A)=0$, then $P(A)=0$ almost surely.
\end{remark}


The Dirichlet process is also conjugate.

\begin{proposition}[Theorem 1, \citet{Ferguson1973}]\label{prop:postdirichlet}
Suppose $P\sim Dir(m)$ and that $X_1,$ $\ldots,$ $X_n$ are independent with common law $P$. The distribution of $P$ given $X_1,\ldots,X_n$ is $Dir(m_*)$, where $m_*$ is defined by $m_*(\{i\})=m(\{i\})+\sum_{j=1}^n \Ind{X_j=i}$ for all $i\in E$.
\end{proposition}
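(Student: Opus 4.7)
The plan is to follow route (i) mentioned by the authors: reduce the Dirichlet posterior to a beta-Stacy posterior via the identification of $Dir(m)$ with the beta-Stacy process already established in the paragraph immediately preceding Proposition \ref{prop:postdirichlet}. This should be the most self-contained path given the material already developed.

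First I would fix the identification $E=\{1,2,\ldots,k\}$ with $k\leq +\infty$ and rewrite the statement in terms of the distribution function $F(t)=\sum_{x\leq t}P(\{x\})$. By the discussion preceding the proposition, $P\sim Dir(m)$ is equivalent to $F\sim BS(c,F_0)$ with $c(t)\equiv m(E)$ and $F_0(\{t\})=m(\{t\})/m(E)$. Conditional on $P$, the observations $X_1,\ldots,X_n$ with common law $P$ correspond to observations $T_1=X_1,\ldots,T_n=X_n$ drawn independently from $F$. Hence Proposition \ref{prop:postbetastacy} applies and the posterior of $F$ is $BS(c_*,F_*)$ with $c_*$ and $F_*$ given by the first pair of formulas.

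Next I would verify algebraically that $(c_*,F_*)$ coincides with the beta-Stacy parameters attached to $Dir(m_*)$, where $m_*(\{i\})=m(\{i\})+\sum_{j=1}^n\Ind{X_j=i}$. Writing $N(A)=\sum_{j=1}^n \Ind{X_j\in A}$, the numerator $c(s)F_0(\{s\})+N(\{s\})$ equals $m_*(\{s\})$ and the denominator $c(s)F_0([s,+\infty))+N([s,+\infty))$ equals $m_*([s,+\infty))$. Each factor of $F_*((t,+\infty))$ thus simplifies to $m_*((s,+\infty))/m_*([s,+\infty))$, and the product telescopes to $F_*((t,+\infty))=m_*((t,+\infty))/m_*(E)$. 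Plugging this back into the formula for $c_*$ gives $c_*(t)\equiv m_*(E)$. These are precisely the beta-Stacy parameters that the above identification associates to $Dir(m_*)$, so the posterior of $P$ is indeed $Dir(m_*)$.

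The main obstacle is essentially a bookkeeping one: making sure the identification between Dirichlet and beta-Stacy parameters is valid when $E$ is countably infinite (so that $k=+\infty$ and $F_0$ may have infinite support), and checking that the telescoping step above is well-defined in that case. Since $m(E)<+\infty$ by assumption and $N$ is a finite counting measure, $m_*(E)<+\infty$ and all ratios appearing in the computation are well-defined, so the argument goes through unchanged. A brief remark confirming this should suffice; no delicate limiting argument is needed.
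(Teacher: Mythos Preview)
Your proposal is correct and follows precisely the first of the two routes the paper itself indicates in the sentence preceding the proposition (``representing the Dirichlet process as a specific case of the beta-Stacy process''); the paper does not spell out either route in detail, so your write-up simply fills in the computation the authors leave implicit.
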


We can now define the semi-Markov beta-Stacy process. Let $m^i(\cdot)$ be a measure on $E$ such that $0<m^i(E)<+\infty$ and $m^i(\{i\})=0$ for all $i\in E$. Let $c^i(t)>0$ for all integers $t>0$. Also let $F_0^i$ be a distribution with support on the positive integers for all $i\in E$. Lastly, let $\mathbf{m}=(m^i)_{i\in E}$, $\mathbf{c}=(c^i)_{i\in E}$, and $\mathbf{F_0}=(F_0^i)_{i\in E}$.

\begin{definition}\label{def:multbetastacy}
A random characteristic couple $(\mathbf{P},\mathbf{F})$ has a \emph{semi-Markov beta-Stacy} distribution with parameters $(\mathbf{m},\mathbf{c},\mathbf{F_0})$, or $(\mathbf{P},\mathbf{F}) \sim SMBS(\mathbf{m},\mathbf{c},\mathbf{F_0})$, if:
\begin{enumerate}
\item $\mathbf{P}$ and $\mathbf{F}$ are independent;
\item the rows $P^i(\cdot)$, $i\in E$, of $\mathbf{P}$ are independent;
\item the distributions $F^i$, $i\in E$, in $\mathbf{F}$ are independent;
\item $P^i(\cdot)$ is a Dirichlet process with base measure $m^i$ for all $i\in E$: $P^i(\cdot)\sim Dir(m^i)$;
\item for all $i\in E$, $F^i$ is a beta-Stacy process with precision parameters $c^i$ and centering distribution $F_0^i$: $F^i\sim BS(c^i,F_0^i)$.
\end{enumerate}
\end{definition}

Note that each realization of $(\mathbf{P},\mathbf{F}) \sim SMBS(\mathbf{m},\mathbf{c},\mathbf{F_0})$ is a valid characteristic couple, justifying the use of the law of a semi-Markov beta-Stacy process as a prior distribution for a characteristic couple $(\mathbf{P},\mathbf{F})$. 
In general, it will be $P^i(\{j\})=P^{i,j}=0$ almost surely for all $j\in E$ such that $m^i(\{j\})=0$. In this case, each realization of a $SMBS(\mathbf{m},\mathbf{c},\mathbf{F_0})$ will be the characteristic couple of a semi-Markov process which cannot perform transition from $i$ to $j$. 

\section{Posterior computations} \label{sec:post}

We will compute the posterior distribution associated to the beta-Stacy process. This will be another beta-Stacy process, showing that it is conjugate.  

We introduce two additional notations.  Suppose $S_{0:t}$ is observed for some fixed $t>0$. Then, for all $i,j\in E$, $i\neq j$, let $M^{i,j}(t) = \sum_{k=1}^t \Ind{S_{k-1}=i, S_k=j}$ be the number of transitions from state $i$ to state $j$ in $S_{0:t}$. Moreover, let $N^{i,t}(l)$ be the number of visits to the state $i$ of length less or equal than $l$ that are observed in $S_{0:t}$. By Remark \ref{rem:equivSM}, if $N(t)=n$, then $N^{i,t}(l)=\sum_{k=0}^{n-1}\Ind{T_k\leq l, L_k=i}$, the number of holding times for state $i$ observed up to time $t$ and that: i) are not censored, and ii) do not exceed $l$.

\begin{example}
In Example \ref{ex:equivSM}, it is $N^{i_0,5}(1)=0$, $N^{i_0,5}(2)=N^{i_0,5}(l)=1$ for all $l\geq 2$; $N^{i_1,5}(1)=N^{i_1,5}(l)=1$ for all $l\geq 1$; $N^{i,5}(l)=0$ for all $l>0$ if $i\neq i_0,i_1$; and $M^{i_0,i_1}(5)=M^{i_1,i_2}(5)=1$. 
\end{example}

With these notations, we can now state the following theorem. Its proof uses the fact that the beta-Stacy process is conjugate with respect to censored observations.
\begin{theorem}\label{thm:conjugacy}
Suppose that $S\sim SM(\mathbf{P},\mathbf{F})$ and $(\mathbf{P},\mathbf{F})\sim SMBS(\mathbf{m},\mathbf{c},\mathbf{F_0})$. Then, the posterior distribution of $(\mathbf{P},\mathbf{F})$ given $S_{0:t}=i_{0:t}$ is $SMBS(\mathbf{m_*},\mathbf{c_*},\mathbf{F_*})$, where:
\begin{enumerate}
\item For all $i\in E$, $m_*^{i}$ is defined by $m^{i}_*(\{j\})=m^i(\{j\})+M^{i,j}(t)$ for $j\in E$, $j\neq i$.
\item For all $i\in E$, $i\neq i_t$, $F^{i}_*$ and $c^{i}_*$ are determined by letting
\begin{align*}
F^{i}_*((u,+\infty)) &= \prod_{s=1}^u \left[ 1-\frac{c^i(s)F_0^i(\{s\})+N^{i,t}(\{s\})}{c^i(s)F_0^i([s,+\infty))+N^{i,t}([s,+\infty))} \right] \\
c^{i}_*(u) &= \frac{c^i(u)F_0^i((u,+\infty))+N^{i,t}((u,+\infty))}{F_*((u,+\infty))}
\end{align*}
for each integer $u>0$.
\item For $i=i_t$, $F^{i}_*$ and $c^{i}_*$ are instead determined by letting
\begin{align*}
F_*^{i}((u,+\infty)) &= \prod_{s=1}^u \left[ 1-\frac{c^i(s)F_0^i(\{s\})+N^{i,t}(\{s\})}{c^i(s)F_0^i([s,+\infty))+N^{i,t}([s,+\infty))+\Ind{l(t)\geq s}} \right] \\
c^{i}_*(u) &= \frac{c^i(u)F_0^i((u,+\infty))+N^{i,t}((u,+\infty))+\Ind{l(t)\geq u}}{F_*((u,+\infty))}
\end{align*}
for each integer $u>0$.
\end{enumerate}
\end{theorem}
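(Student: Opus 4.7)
The plan is to reduce the theorem to separate applications of the conjugacy results for the beta-Stacy process (Proposition \ref{prop:postbetastacy}) and the Dirichlet process (Proposition \ref{prop:postdirichlet}), exploiting the independence structure built into the definition of $SMBS(\mathbf{m},\mathbf{c},\mathbf{F_0})$. First I would translate the observation $S_{0:t}=i_{0:t}$ into equivalent information about the underlying Markov renewal process $(L,T)$ via Remark \ref{rem:equivSM} and Remark \ref{rem:equivSM2}: conditioning on $S_{0:t}=i_{0:t}$ is the same as conditioning on the values of $N(t)$, $L_{0:N(t)}$, $T_{0:N(t)-1}$, and on the censoring event $\{T_{N(t)}>l(t)\}$ (with $L_{N(t)}=i_t$). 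In these terms, $M^{i,j}(t)$ counts the observed $i\to j$ transitions among $L_{0:N(t)}$, and, for each $i\in E$, the collection of observed holding times associated with visits to state $i$ is $\{T_k:k<N(t),\,L_k=i\}$, whose empirical distribution is encoded by the counts $N^{i,t}(\cdot)$ described in Remark \ref{rem:equivSM2}.

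Next I would write out the joint conditional likelihood of the sufficient statistics given $(\mathbf{P},\mathbf{F})$, using the Markov renewal property (Definition \ref{def:renewal}). The one-step transition kernel factorizes the conditional probability of each jump as $F^{L_k}(\{T_k\})\,P^{L_k,L_{k+1}}$, so the likelihood telescopes into
\begin{equation*}
\prod_{i\in E}\left[\prod_{j\neq i}\bigl(P^{i,j}\bigr)^{M^{i,j}(t)}\right]\cdot\prod_{i\in E}\left[\prod_{k<N(t),\,L_k=i}F^i(\{T_k\})\right]\cdot\bigl(1-F^{i_t}(l(t))\bigr),
\end{equation*}
where the last factor accounts for the censored holding time $T_{N(t)}>l(t)$ in the terminal block. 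Because the prior has $\mathbf{P}\perp\mathbf{F}$, the rows $P^i$ are mutually independent, and the $F^i$ are mutually independent (parts 1--3 of Definition \ref{def:multbetastacy}), this factorization shows that the posterior preserves all these independences, and that it suffices to update each $P^i$ and each $F^i$ separately.

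Then I would apply the two conjugacy propositions componentwise. For each row $P^i\sim Dir(m^i)$, the relevant data is the i.i.d.\ sample $\{L_{k+1}:k<N(t),\,L_k=i\}$ drawn from $P^i$ (by the Markov renewal property conditional on $L_k=i$), which contributes exactly $M^{i,j}(t)$ observations falling in $\{j\}$; Proposition \ref{prop:postdirichlet} immediately gives the updated base measure $m^i_*(\{j\})=m^i(\{j\})+M^{i,j}(t)$. For each $F^i\sim BS(c^i,F^i_0)$ with $i\neq i_t$, the relevant data is the i.i.d.\ sample $\{T_k:k<N(t),\,L_k=i\}$, yielding exactly the counts $N^{i,t}(\cdot)$, and Proposition \ref{prop:postbetastacy} (exact-observation part) gives the formulas in item 2. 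For $i=i_t$ there is the additional censored observation $T_{N(t)}>l(t)$; I would handle this either by invoking the general version of Walker's theorem that mixes exact and censored data, or by applying Proposition \ref{prop:postbetastacy} in two stages (first the exact observations, then the censored one with $t^*=l(t)$), yielding the extra $\Ind{l(t)\geq s}$ summand that appears in item 3.

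The main obstacle is not any single calculation but the careful bookkeeping in the second step: verifying that after conditioning on $S_{0:t}=i_{0:t}$ the posterior likelihood factorizes cleanly along states $i\in E$, so that the prior independence of the $P^i$'s and $F^i$'s is inherited by the posterior, and that the terminal-block censoring contributes only to $F^{i_t}$ and in exactly the form anticipated by the indicator $\Ind{l(t)\geq s}$. Once this decomposition is established, the theorem reduces to a direct, componentwise invocation of Propositions \ref{prop:postdirichlet} and \ref{prop:postbetastacy}.
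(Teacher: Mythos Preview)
Your proposal is correct and follows essentially the same route as the paper: translate $S_{0:t}=i_{0:t}$ into the equivalent data $(L_{0:n},T_{0:n-1},\{T_n>l(t)\})$ via Remark~\ref{rem:equivSM}, factor the resulting likelihood across the $P^i$'s and $F^i$'s, deduce that the prior independence structure persists a posteriori, and then update each component separately using Propositions~\ref{prop:postbetastacy} and~\ref{prop:postdirichlet}. The paper handles the censored terminal block exactly as you suggest, and your identification of the bookkeeping step as the only real work matches the emphasis of the original proof.
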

\begin{proof}
By Remark \ref{rem:equivSM}, the likelihood function associated with $S_{0:t}=i_{0:t}$ is 
\begin{align*}
\Prob{S_{0:t}=i_{0:t}|\mathbf{P},\mathbf{F}} &= \Prob{L_{0:n}=l_{0:n}, T_{0:n-1}=t_{0:n-1}, T_n>l(t)|\mathbf{P},\mathbf{F}} \\
&\mkern-18mu\mkern-18mu\mkern-18mu =F^{l_0}(t_0)\left[\prod_{k=1}^{n-1}F^{l_k}(\{t_k\})P^{l_{k-1},l_{k}}\right]\cdot\left[F^{l_{n}}((l(t),+\infty))\right] \\
&\mkern-18mu\mkern-18mu\mkern-18mu = \left[\prod_{i\in E} \prod_{s=1}^t F^i(\{s\})^{N^{i,t}(\{s\})}F^{i}((l(t),+\infty))^{\Ind{i=l_n}}\right] \cdot  \left[\prod_{\substack{i,j\in E\\ i\neq j}} P^{i}(\{j\})^{M^{i,j}(t)}\right]
\end{align*}
The likelihood is the product of individual terms depending only on $F^i$ or $P^i(\cdot)$ for some $i$. Hence, by points 1-3 of Definition \ref{def:multbetastacy}, conditional on $S_{0:t}=i_{0:t}$, $\mathbf{P}$ and $\mathbf{F}$ are independent, the rows $P^i(\cdot)$, $i\in E$, of $\mathbf{P}$ are independent, and the distributions $F^i$, $i\in E$, in $\mathbf{F}$ are independent. Moreover, i) the posterior distribution of $F^i$, $i\neq l_n$, depends only on those $T_k$ such that $L_k=i$, and it is the same as if these were an independent sample from $F^i$; ii) the same is true for the posterior distribution of $F^{l_n}$, except that $T_n$ is censored (only $T_n>l(t)$ is known); iii) the posterior distribution of $P^i(\cdot)$ depends only on those states in $l_{0:n}$ that are preceded by the state $i$, and it is the same as if these states were an independent sample from $P^i(\cdot)$. The thesis now follows from Propositions \ref{prop:postbetastacy} and \ref{prop:postdirichlet}.
\end{proof}

This theorem characterizes the posterior law of $(\mathbf{P},\mathbf{F})$ conditional on the history of a single process $S\sim SM(\mathbf{P},\mathbf{F})$, observed for a fixed amount of time $t$. It shows that the posterior law of $F^i$ depends only on the lengths of the visits to state $i$ observed in the process history $S_{0:t}$ (if $i=S_t$, the duration of the last visit to $i$ in $S_{0:t}$ is censored and only known to exceed $l(t)$). Instead, the posterior of law of $P^{i,j}$ depend only on the number of transitions from $i$ to $j$ observed in $S_{0:t}$.

Theorem \ref{thm:conjugacy} can also be used to compute the posterior law of $(\mathbf{P},\mathbf{F})$ in more general settings. In particular, the Theorem also holds when $t$ could random instead of fixed, provided this is either a stopping time of $S$ or independent of $S$ and $(\mathbf{P},\mathbf{F})$ \citep{Heitjan1991}. In addition, instead of only one, multiple independent processes $S^1,\ldots,S^n\sim SM(\mathbf{P},\mathbf{F})$ may be observed up to times $t^1,\ldots,t^n$ (e.g. because the health status of $n$ patients is observed in parallel; c.f. \citealp{Mitchell2011}). Here, the posterior law of $(\mathbf{P},\mathbf{F})$ given $S^1_{1:t^1},\ldots,S^n_{1:t^n}$ is obtained by applying Theorem \ref{thm:conjugacy} iteratively. 

\section{Predictive laws and reinforced semi-Markov processes} \label{sec:pred}

Assuming $(\mathbf{P},\mathbf{F})\sim SMBS(\mathbf{m},\mathbf{c},\mathbf{F_0})$, we will now derive the one-step-ahead predictive distributions of $S$, i.e. the conditional distributions $\Prob{S_{t+1}=\cdot|S_{0:t}}$ for $t\geq 0$. These play an important role in applications. For instance, in Example \ref{ex:factory}, they quantify the future risk that the factory will have to stop. 

Suppose that $S\sim SM(\mathbf{P},\mathbf{F})$ and $(\mathbf{P},\mathbf{F})\sim SMBS(\mathbf{m},\mathbf{c},\mathbf{F_0})$. Define for simplicity $x(t)=l(t)+1$ for all integers $t\geq 0$. 

\begin{theorem}\label{thm:pred}
For all $t\geq 0$, $\Prob{S_{t+1}=\cdot|S_{0:t}}=k_t(S_{0:t};\cdot)$, where, if $S_t=i\neq j$, it is
\begin{align*}
k_t(S_{0:t};i)&= \frac{c^{i}(x(t))F_0^i((x(t),+\infty))+N^{i,t}((x(t),+\infty))}{c^i(x(t))F_0^i([x(t),+\infty))+N^{i,t}([x(t),+\infty))}, \\
k_t(S_{0:t};j)&= \frac{c^i(x(t))F_0^i(\{x(t)\})+N^{i,t}(\{x(t)\})}{c^i(x(t))F_0^i([x(t),+\infty))+N^{i,t}([x(t),+\infty))} \cdot \frac{m^i(\{j\})+M^{i,j}(t)}{m^i(E)+\sum_{h\neq i} M^{i,h}(t)}.
\end{align*}
\end{theorem}
\begin{proof}
By Remark \ref{rem:equivSM}, for all $j\in E$ it is $$\Prob{S_{t+1}=j|S_{0:t}} = \Prob{L_{N(t+1)}=j|N(t), L_{0:N(t)}, T_{0:N(t)-1}, T_{N(t)}> l(t)}.$$
Hence, on the event $\{N(t)=n,L_{0:n}=i_{0:n},T_{0:n-1}=t_{0:n-1}\}$ with $j=i=i_n$ it is
\begin{align*}
\Prob{S_{t+1}=j|S_{0:t}} &= \Prob{T_n>x(t)| L_{0:n}=i_{0:n}, T_{0:n-1}=t_{0:n-1}, T_{n}> l(t)} \\
&= \frac{ \E{F^{i_n}((x(t),+\infty)) | L_{0:n}=i_{0:n}, T_{0:n-1}=t_{0:n-1}} }{ \E{F^{i_n}((l(t),+\infty)) | L_{0:n}=i_{0:n}, T_{0:n-1}=t_{0:n-1}}} \\
&=\frac{F^{i_n}_*((x(t),+\infty))}{F^{i_n}_*([x(t),+\infty))} = k_t(S_{0:t},j),
\end{align*}
where the third and fourth equalities follows from Theorem \ref{thm:conjugacy}. Similarly, on $\{N(t)=n,L_{0:n}=i_{0:n},T_{0:n-1}=t_{0:n-1}\}$ with $j\neq i= i_n$, $\Prob{S_{t+1}=j|S_{0:t}}$ equals 
\begin{align*}
&\Prob{T_n=x(t),L_{n+1}=j|L_{0:n}=i_{0:n}, T_{0:n-1}=t_{0:n-1}, T_{n}> l(t)} =\\
&\quad\quad=\frac{F_*^{i_{n}}(\{x(t)\})}{F_*^{i_{n}}([x(t),+\infty))}\cdot \Prob{L_{n+1}=j|L_{0:n}=i_{0:n}, T_{0:n-1}=t_{0:n-1}, T_{n}> l(t)} \\
&\quad\quad=\frac{F_*^{i_{n}}(\{x(t)\})}{F_*^{i_{n}}([x(t),+\infty))}\cdot \E{P^{i_n,j}  |L_{0:n}=i_{0:n}, T_{0:n-1}=t_{0:n-1}, T_{n}> l(t)}\\
&\quad\quad=\frac{F_*^{i_{n}}(\{x(t)\})}{F_*^{i_{n}}([x(t),+\infty))}\cdot \frac{m_*^i(\{j\})}{m_*^i(E)} =k_t(S_{0:t},j).
\end{align*}
\end{proof}

By the Ionescu-Tulcea Theorem \citep[Theorem 4.7]{Cinlar2011}, the sequence of predictive distributions $k_t$ defines the law of a new stochastic process: 
\begin{definition}
The process $S=(S_t)_{t\geq 0}$ is a \emph{reinforced semi-Markov process} with parameters $(\mathbf{m},\mathbf{c},\mathbf{F_0})$, or $S\sim RSM(\mathbf{m},\mathbf{c},\mathbf{F_0})$, if $\Prob{S_0=l_0}=1$ and $\Prob{S_{t+1}=j|S_{0:t}}=k_t(S_{0:t};j)$ for all $j\in E$ and $t\geq 0$.
\end{definition}

With this definition, the following is a trivial corollary of Theorem \ref{thm:pred}: 
\begin{corollary}\label{corollary:preddist}
If, conditionally on $(\mathbf{P},\mathbf{F})\sim SMBS(\mathbf{m},\mathbf{c},\mathbf{F_0})$, $S \sim SM(\mathbf{P},\mathbf{F})$, then marginally it is $S\sim RSM(\mathbf{m},\mathbf{c},\mathbf{F_0})$.
\end{corollary}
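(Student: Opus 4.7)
The plan is straightforward: a discrete-time process with values in the countable set $E$ is determined in law by its initial distribution together with its sequence of one-step-ahead conditional distributions, and Theorem \ref{thm:pred} has already identified both ingredients for the marginal process $S$. So the corollary really is a repackaging of Theorem \ref{thm:pred} against the definition of $RSM(\mathbf{m},\mathbf{c},\mathbf{F_0})$.

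First I would verify that the initial distribution matches. By Definition \ref{def:renewal}, the underlying Markov renewal process $(L,T)$ satisfies $L_0=l_0$ almost surely. Since each holding time $T_h$ is a positive integer, $\tau_n\geq 1$ for every $n\geq 1$, hence $N(0)=\sum_{n=1}^{+\infty}\Ind{\tau_n\leq 0}=0$ and $S_0=L_{N(0)}=L_0=l_0$ almost surely. This matches the initial condition in the definition of $RSM(\mathbf{m},\mathbf{c},\mathbf{F_0})$. Second, Theorem \ref{thm:pred} states exactly that $\Prob{S_{t+1}=j|S_{0:t}}=k_t(S_{0:t};j)$ almost surely for every $j\in E$ and every $t\geq 0$, which is the other defining property of an $RSM$ process.

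To conclude, I would invoke uniqueness: the finite-dimensional probabilities of $S$ decompose as
\[
\Prob{S_{0:t}=i_{0:t}}=\Ind{i_0=l_0}\prod_{s=0}^{t-1} k_s(i_{0:s};i_{s+1})
\]
for every $i_{0:t}\in E^{t+1}$, so by Kolmogorov's extension theorem (equivalently, by the uniqueness part of the Ionescu-Tulcea theorem already cited just before the definition of $RSM$) the marginal law of $S$ coincides with that of an $RSM(\mathbf{m},\mathbf{c},\mathbf{F_0})$ process. There is no serious obstacle here: the work was already done in Theorem \ref{thm:pred}, and the only minor subtleties are verifying that $S_0=l_0$ almost surely and noting that the almost-sure identification of the predictive kernels is enough to pin down all finite-dimensional distributions via the display above.
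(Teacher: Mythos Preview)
Your proposal is correct and matches the paper's approach exactly: the paper states the corollary as a ``trivial corollary of Theorem \ref{thm:pred}'' with no further argument, and you have simply spelled out why it is trivial---namely, that the initial condition $S_0=l_0$ and the predictive kernels $k_t$ determine the law of $S$, which by definition is the law of an $RSM(\mathbf{m},\mathbf{c},\mathbf{F_0})$ process.
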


The process $S\sim RSM(\mathbf{m},\mathbf{c},\mathbf{F_0})$ is ``reinforced'' because if it performs a transition, this becomes more likely in the future \citep{Coppersmith1986,Pemantle1988,Pemantle2007}. This is because if $S_t=i$, then $k_{t}(S_{0:t},j)$ is increasing in $M^{i,j}(t)$, the number of times that a transition from $i$ to $j$ has already occurred by time $t$. 

\section{Predictive characterization by reinforced urn processes} \label{sec:urns}

We characterize semi-Markov beta-Stacy processes by means of reinforced urns. We build on the urn-based
characterizations of the 
Dirichlet process of \citet{Blackwell1973} and the beta-Stacy process of \citet{Muliere2000}.

\citet{Blackwell1973} characterized the Dirichlet process $ Dir(m)$ as the mixing measure of the sequence of colors extracted from a generalized P\`olya urn $U$. This is defined as follows: $U$ initially contains $m(\{i\})$ balls of color $i\in E$. Balls are repeatedly sampled from $U$. Each extracted ball is replaced together with an additional one of the same color. Denote with $(L_n)_{n\geq 1}$ the sequence of colors extracted from $U$. \citet{Blackwell1973} showed that, conditional on a random $P\sim Dir(m)$, the $L_n$ are independent and have common distribution $P$. 

\citet{Muliere2000} characterized the beta-Stacy process $BS(c,F_0)$ using the following urn process. Let $V=(V_k)_{k\geq 1}$ be a sequence of P\`olya urns. Each $V_k$ contains $c(t)F_0(\{t\})$ black balls and $c(t)F_0((t,+\infty))$ white balls. Every time a ball is extracted from an urn, it is replaced together with another of the same color. The urns $V_k$ generate a sequence $(T_n)_{n\geq 1}$ of random variables. Specifically, starting from $n=1$, the process proceeds as follows: beginning from $k=1$, a ball is sampled from $V_k$. If it's white, sampling continues from $V_{k+1}$, otherwise $T_n=k$. Once $T_n$ is generated, $T_{n+1}$ is determined by restarting from $V_1$. \citet{Muliere2000} showed that, conditional on some $F(\cdot)\sim BS(c,F_0)$,  the $T_n$ are independent and have distribution $F$.

\begin{definition}
We say that the generalized P\`olya urn $U$ that characterizes the $Dir(m)$ process is a \emph{$Dir(m)$-urn}. Similarly, we say that the system $V$ of reinforced urns $V_1$, $V_2$, $V_3$, $\ldots$ that characterizes the $BS(c,F_0)$ process is a \emph{$BS(c,F_0)$-system}.
\end{definition}

We now characterize the semi-Markov beta-Stacy process. We associate each $i\in E$ with a $Dir(m^i)$-urn $U_i$ and a $BS(c^i,F_0^i)$-system $V_i=(V_{i,k})_{k\geq 1}$. Generate $\{(L_k,T_k)\}_{k\geq 0}$ as follows: set $L_0=l_0$. Then, for all $k\geq 0$, generate $T_k$ from $V_{L_k}$, and, independently, set $L_{k+1}$ to the color extracted from $U_{L_k}$. This process is illustrated in Figure \ref{fig:urnproc}.

\begin{figure}
\centering
\includegraphics[scale=0.8,trim={0cm 6cm 10cm 0cm},clip]{./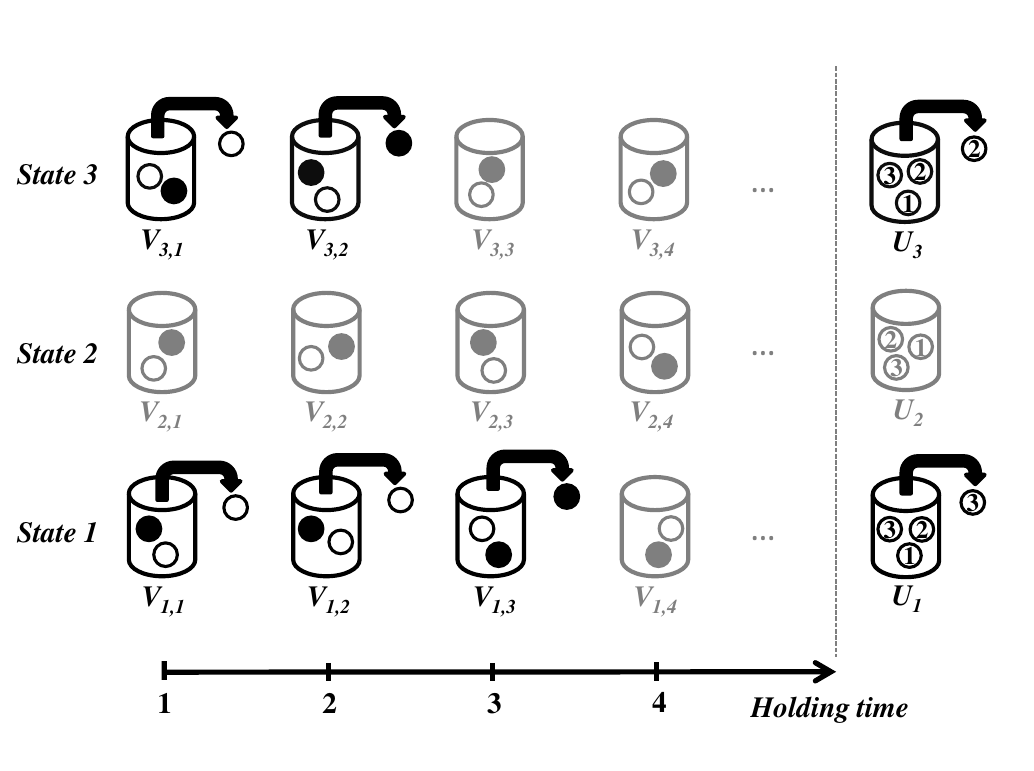}
\caption{Graphical illustration of the reinforced urn process of Section \ref{sec:urns}. In the figure, the path of the process corresponds to the observation of $(L_0,T_0)=(1,3)$, $(L_1,T_1)=(3,2)$, and $L_2=2$. Specifically, the process starts from the urn corresponding to $T_0=1$ for the holding time of the state $L_0=1$. The $BS(c^1,F_0^1)$-system $V_{11}$, $V_{12}$, $V_{13}$, $\ldots$ is traversed left to right until a black ball is extracted from $V_{13}$, determining $T_0=3$. The process then jumps to the $Dir(m^1)$-urn $U_1$, from which a ball of color ``3'' is extracted. Thus, $L_1=3$ and the process jumps to $V_{31}$, the first urn of the $BS(c^3,F_0^3)$-system represented in the third row of the graph. The process then resumes similarly to generate the values $T_1=2$ and $L_2=2$.}
\label{fig:urnproc}
\end{figure}

Continuing, define $S=(S_t)_{t\geq 0}$ as follows. Let $\tau_0=0$, $\tau_{n+1}=\sum_{h=0}^n T_h$ for all $n\geq 0$, and $N(t)=\sum_{n=1}^{+\infty}\Ind{\tau_n \leq t}$ for all integers $t\geq 0$. Lastly, define $S_{t}=L_{N(t)}$ for all $t\geq 0$. It holds that $\mathbb{P}(S_{t+1}=\cdot|S_{0:t})=k_{t}(S_{0:t},\cdot)$, where $k_t$ is the kernel in Theorem \ref{thm:pred}. Hence, $S\sim RSM(\mathbf{m},\mathbf{c},\mathbf{F_0})$. Any reinforced semi-Markov process can be generated in this way.

The process $S=(S_t)_{t\geq 0}$ is \emph{recurrent} if $(L_k)_{k\geq 0}$ visits every state in $E$ an infinite number of times with probability 1. If $S$ is recurrent, the time $v_{i,n}$ of the $n$-th visit of $(L_k)_{k\geq 0}$ to the state $i$ is a finite and non-negative for every $n\geq 1$ and $i\in E$. Let $T_{i,n}=T_{v_{i,n}}$ be the length of time $S$ stays in $i$ during its $n$-th visit, and $L_{i,n}=L_{v_{i,n}+1}$ the next state visited  by $S$ after its $n$-th visit to $i$. 

The following result is a partial converse of Corollary \ref{corollary:preddist}:
\begin{theorem}\label{thm:reprthm} 
Suppose $S\sim RSM(\mathbf{m},\mathbf{c},\mathbf{F_0})$ is recurrent. Then there exists a random characteristic couple $(\mathbf{P},\textbf{F})$ such that: 
\begin{enumerate}
\item conditional on $(\mathbf{P},\textbf{F})$, $S\sim SM(\mathbf{P},\textbf{F})$;
\item $(\mathbf{P},\textbf{F})\sim SMBS(\mathbf{m},\mathbf{c},\mathbf{F_0})$.
\end{enumerate}
\end{theorem}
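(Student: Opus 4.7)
The plan is to exploit the urn representation of Section \ref{sec:urns} together with the de Finetti-style characterizations of $Dir(m)$ and $BS(c,F_0)$ via their associated urn schemes. First I would realize $S$ as the output of the reinforced urn process: since the law of any $RSM(\mathbf{m},\mathbf{c},\mathbf{F_0})$ process is uniquely determined by its predictive kernels $k_t$, there is no loss in assuming $S$ is constructed on a probability space carrying the independent generalized P\`olya urns $U_i$ (of type $Dir(m^i)$) and beta-Stacy systems $V_i$ (of type $BS(c^i,F_0^i)$) introduced in Section \ref{sec:urns}, with $(L,T)$ generated by the jump rule described there. Recurrence of $S$ then implies that every $U_i$ and every $V_i$ is visited infinitely often almost surely, so that the state-indexed subsequences $(L_{i,n})_{n\geq 1}$ and $(T_{i,n})_{n\geq 1}$ are a.s.\ infinite for each $i\in E$.

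Next I would apply the two urn-based representation theorems state by state. For each fixed $i$, $(L_{i,n})_{n\geq 1}$ is by construction a P\`olya sequence generated by $U_i$, so the result of \citet{Blackwell1973} recalled at the start of Section \ref{sec:urns} furnishes a random measure $P^i\sim Dir(m^i)$ such that, conditionally on $P^i$, the $L_{i,n}$ are i.i.d.\ with law $P^i$. Likewise, $(T_{i,n})_{n\geq 1}$ is the sequence produced by successive traversals of the beta-Stacy system $V_i$, so by the representation result of \citet{Muliere2000} recalled in Section \ref{sec:urns} there exists $F^i\sim BS(c^i,F_0^i)$ such that, conditionally on $F^i$, the $T_{i,n}$ are i.i.d.\ $F^i$. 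Setting $\mathbf{P}=(P^i)_{i\in E}$ and $\mathbf{F}=(F^i)_{i\in E}$, the physical independence of the urns and systems across different $i$, as well as between $U_i$ and $V_i$ for the same $i$, yields the mutual independence of the entire collection $\{P^i,F^i:i\in E\}$. This verifies the five requirements of Definition \ref{def:multbetastacy} and establishes point (2) of the theorem.

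The last step is to check point (1), namely that conditional on $(\mathbf{P},\mathbf{F})$ the process $(L,T)$ has the Markov renewal structure of Definition \ref{def:renewal}. Let $\nu_i(k)=\sum_{j=0}^{k-1}\Ind{L_j=i}$, a quantity measurable with respect to $L_{0:k-1}$; by the urn construction, on $\{L_k=i\}$ we have $T_k=T_{i,\nu_i(k)+1}$ and $L_{k+1}=L_{i,\nu_i(k)+1}$, while the past $(L_{0:k},T_{0:k-1})$ is a deterministic function of the already-consumed initial segments $\{L_{i,n},T_{i,n}:n\leq \nu_i(k)\}_{i\in E}$. Since, given $(\mathbf{P},\mathbf{F})$, the remaining entries of these i.i.d.\ sequences are independent of the consumed ones and have the advertised marginal laws, the pair $(T_k,L_{k+1})$ is conditionally distributed as $F^i\otimes P^i$ given the past and $(\mathbf{P},\mathbf{F})$ on $\{L_k=i\}$, which is exactly the transition rule of Definition \ref{def:renewal}. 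The main obstacle will be this bookkeeping: correctly aligning the time-indexed predictive description of $MR(\mathbf{P},\mathbf{F})$ with the state-indexed i.i.d.\ structure supplied by de Finetti, and using recurrence to guarantee that the index $\nu_i(k)+1$ is always within range of the available sequence. Once this correspondence is formalized, the rest is an assembly of the representation results already recalled in the paper.
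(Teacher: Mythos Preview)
Your proposal is correct and follows essentially the same route as the paper: the paper packages your second paragraph into a separate lemma (Lemma \ref{lemma:predlemma}), invoking the tail $\sigma$-fields of the state-indexed sequences to make the independence of the de Finetti measures $P^i$ and $F^i$ precise, and then carries out the same bookkeeping you describe (identifying $(T_n,L_{n+1})$ on $\{L_n=i\}$ with the next unused pair from the $i$-indexed sequences) to verify the Markov renewal property. The only presentational difference is that the paper writes the alignment via the hitting times $v_{i,k}$ rather than your counting variable $\nu_i(k)$, which amounts to the same thing.
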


To show this result we will use of the following lemma:

\begin{lemma}\label{lemma:predlemma}
Suppose $S\sim RSM(\mathbf{m},\mathbf{c},\mathbf{F_0})$ is recurrent. Then: 
\begin{enumerate}
\item the sequences $\{(L_{i,n},T_{i,n})\}_{n\geq 1}$ for $i\in E$ are independent;
\item the sequences $(L_{i,n})_{n\geq 1}$ and $(T_{i,n})_{n\geq 1}$ are independent for all $i\in E$;
\item conditional on a $P^i\sim Dir(m^i)$, the $L_{i,n}$ are independent and have law $P^i(\cdot)$;
\item conditional on a $F^i(\cdot)\sim BS(c^i,F_0^i)$, the $T_{i,n}$ are independent and have law $F^i(\cdot)$;
\item all the $P^i(\cdot)$ and $F^i(\cdot)$ are independent. 
\end{enumerate}
\end{lemma}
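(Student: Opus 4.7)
The plan is to exploit the urn-based construction of $S\sim RSM(\mathbf{m},\mathbf{c},\mathbf{F_0})$ presented in Section~\ref{sec:urns}. In that construction, $S$ is driven by an independent collection of $Dir(m^i)$-urns $\{U_i: i\in E\}$ together with $BS(c^i,F_0^i)$-systems $\{V_i: i\in E\}$, one pair per state. The essential structural feature is that a single step of the walk draws from (and reinforces) exactly one urn, and the outcome of that draw is a function only of the current contents of that urn, i.e.\ only of the previous draws from the same urn. In particular, no urn is ever affected by activity at any other urn.

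Exploiting this, I would realize $S$ on a probability space carrying, for every $i\in E$, a pre-generated infinite P\`olya sequence $(L^*_{i,n})_{n\geq 1}$ of successive draws from $U_i$ and a pre-generated infinite sequence $(T^*_{i,n})_{n\geq 1}$ obtained from successive traversals of $V_i$, with all these sequences mutually independent across $i$ and across the two types. A coupling argument (equivalently, an induction on time) shows that the walk constructed in Section~\ref{sec:urns} is equivalent to one that, upon its $n$-th visit to state $i$, reads off $L^*_{i,n}$ to determine the next state and $T^*_{i,n}$ to determine the holding time. Recurrence guarantees that for every $i$ all visits eventually occur, so $L_{i,n}=L^*_{i,n}$ and $T_{i,n}=T^*_{i,n}$ almost surely for all $i$ and $n$.

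With this coupling, points 1, 2 and 5 are immediate from the independence of the pre-generated ingredients. Point 3 then follows from the Blackwell--MacQueen characterization recalled in Section~\ref{sec:urns}: the sequence $(L^*_{i,n})_{n\geq 1}$ is exchangeable with de Finetti measure $Dir(m^i)$, yielding the desired random probability measure $P^i$. Point 4 follows analogously from the urn-scheme characterization of the beta-Stacy process of \citet{Muliere2000} quoted in Section~\ref{sec:urns}, which furnishes $F^i\sim BS(c^i,F_0^i)$ making the $T^*_{i,n}$ conditionally i.i.d.\ $F^i$. The joint independence of all the $P^i$ and $F^i$ is inherited from the independence of the corresponding pre-generated sequences, since each $P^i$ (resp.\ $F^i$) is a measurable function of $(L^*_{i,n})_{n\geq 1}$ (resp.\ $(T^*_{i,n})_{n\geq 1}$) alone.

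The principal technical obstacle is the coupling step: one must verify that the walk's random visit schedule does not bias any individual urn's sequence. The cleanest way to do this is to build the walk directly on the enlarged probability space where the $(L^*_{i,n})$ and $(T^*_{i,n})$ are pre-generated, defining it recursively so that its $(n{+}1)$-th visit to state $i$ consumes the next unused entry of the corresponding sequence; one then checks inductively that this construction reproduces the predictive kernels $k_t$ of Theorem~\ref{thm:pred}, so its law agrees with that of $S$. Recurrence is precisely what ensures that every entry of every pre-generated sequence is eventually consumed, making the equalities $L_{i,n}=L^*_{i,n}$ and $T_{i,n}=T^*_{i,n}$ hold with probability one. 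Once this coupling is in place the remaining assertions are purely bookkeeping on top of the two classical representation theorems.
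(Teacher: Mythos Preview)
Your proposal is correct and follows essentially the same route as the paper: both arguments rest on the observation that, in the urn construction of Section~\ref{sec:urns}, the sequence $(L_{i,n})_{n\geq 1}$ is exactly the output of the $Dir(m^i)$-urn $U_i$ and $(T_{i,n})_{n\geq 1}$ is exactly the output of the $BS(c^i,F_0^i)$-system $V_i$, with all urns mutually independent, so points (1)--(4) follow from the Blackwell--MacQueen and \citet{Muliere2000} representation theorems. The only cosmetic difference is in point~(5): you deduce independence of the $P^i$ and $F^i$ from their measurability with respect to the full sequences $(L^*_{i,n})_{n\geq 1}$ and $(T^*_{i,n})_{n\geq 1}$, whereas the paper invokes the sharper fact that they are measurable with respect to the tail $\sigma$-fields $\mathcal{L}_i$ and $\mathcal{T}_i$; both arguments are valid and yield the same conclusion.
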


\begin{proof}[Proof of Lemma \ref{lemma:predlemma}]
To show points (1)-(4), note that: i) for all $i\in E$, the sequence $(L_{i,n})_{n\geq 1}$ is generated by $Dir(m^i)$-urn $U_i$; ii) for all $i\in E$, $(T_{i,n})_{n\geq 1}$ is generated by the $BS(c^i,F_0^i)$-system $V_i$;  iii) the outcomes of the urns $U_i$, $V_{1,i}$, $V_{i,2}$, $\ldots$, for all $i\in E$ are independent. To prove (5), since $(L_{i,n})_{n\geq 1}$ and $(T_{i,n})_{n\geq 1}$ are exchangeable, by the de Finetti representation theorem, $P^i(\cdot)=\mathbb{P}(L_{i,1}\in\cdot|\mcal{L}_i)$ and $F^i(\cdot)=\mathbb{P}(T_{i,1}\in\cdot|\mcal{T}_i)$, where $\mcal{L}_i$ and $\mcal{T}_i$ are, respectively, the tail $\sigma$-fields of $(L_{i,n})_{n\geq 1}$ and $(T_{i,n})_{n\geq 1}$ \citep[Chapter 1]{Kallenberg2006}. The thesis now follows since all $\mcal{L}_i$ and $\mcal{T}_i$, $i\in E$, are independent. 
\end{proof}

\begin{proof}[Proof of Theorem \ref{thm:reprthm}]
Take $P^i(\cdot)$ and $F^i(\cdot)$ for $i\in E$ as given by Lemma \ref{lemma:predlemma}. Define $\mathbf{P}=(P^i(\{j\}))_{i,j\in E}$ (note that $P^i(\{i\})=0$ almost surely since $m^i(\{i\})=0$) and $\mathbf{F}=\{F^i(\cdot):i\in E\}$. Conditional on $(\mathbf{P},\mathbf{F})$, $\{(L_k,T_k)\}_{k\geq 0}$ is a Markov renewal process with characteristic couple $(\mathbf{P},\mathbf{F})$. This is because $\mathbb{P}(L_0=l_0|(\mathbf{P},\mathbf{F}))=1$ by definition. Moreover, on the event $\{L_n=i, v_{i,k}=n\}$, $k\leq n$, it is
\begin{align*}
\mathbb{P}(L_{n+1}=j,T_{n}\leq t|L_{0:n},T_{0:n-1},(\mathbf{P},\mathbf{F})) &= \mathbb{P}(L_{i,k}=j,T_{i,k}\leq t|(\mathbf{P},\mathbf{F})) \\
&= P^i(\{j\})F^i(t).
\end{align*}
This concludes the proof.
\end{proof}

\section{Simulation study}\label{sec:applications}

To illustrate the semi-Markov beta-Stacy process in action, we conducted a simulation study based Example \ref{ex:factory}. 

\subsection{Description of the simulation study}\label{sec:dgd}

Following \citet[Sections 4.3]{Barbu2009}, we generated a single realization $s_{0:1,000}$ from the semi-Markov process $(S_t)_{t\geq 0}$ describing the day-by-day status of the factory from day 0 to day 1,000. The law of this process was determined by assuming that: i) $S_0=1$ (so the factory begins fully functional); ii) the transition matrix is 
\begin{equation}\label{eqn:transmatr}
\mathbf{P}=\left[
\begin{array}{ccc}
0 & 1 & 0 \\ 
0.95 & 0 & 0.05 \\ 
1 & 0 & 0
\end{array} 
\right];
\end{equation}
 iii) $F^{1}(\cdot)$ is the geometric distribution $F^1_{\textrm{true}}(\{t\})=p(1-p)^{t-1}$, $t\geq 1$, with parameter $p=0.8$; iv) $F^2(\cdot)$ is the first-type discrete Weibull distribution $F^2_\textrm{true}(t)=1-q^{t^{k}}$, $t\geq 1$, of \citet{Nakagawa1975} with parameters $q=0.3$ and $k=0.5$; v) $F^3(\cdot)$ is the first-type discrete Weibull distribution $F^3_\textrm{true}(\cdot)$ with parameters $q=0.6$ and $k=0.9$. The observed sequence $s_{0:1,000}$ was considered as data to perform posterior inferences. 

\subsection{Prior specification}\label{sec:priornp}

We assign a semi-Markov beta-Stacy prior distribution $SMBS(\mathbf{m},\mathbf{c},\mathbf{F_0})$ to the data-generating characteristic couple $(\mathbf{P},\mathbf{F})$. We consider the measures $m^1(\cdot)$, $m^2(\cdot)$, and $m^3(\cdot)$ on $E=\{1,2,3\}$ determined by the conditions $m^i(\{1,2,3\})$ $=m^1(\{2\})$ $=m^2(\{1\})$ $=m^2(\{3\})$ $=m^3(\{1\})=1$ for all $i\in E$ (so $P^{2,1}$ and $P^{2,3}$ are marginally uniform over $(0,1)$). For all $i=1,2,3$, $F_0^i(\cdot)$ is the geometric distribution with parameter $p=0.3$. For all $i\in E$, we set $c^i(t)=c$ for all $t\geq 1$. We fix $c=0.1$, $1$, or $10$ separately. 

\subsection{Posterior distributions}

Figure \ref{fig:post} shows the plots of the posterior mean of $F^2(\cdot)$, together with a sample of 500 samples from the corresponding distribution. Posterior distributions were obtained from Theorem \ref{thm:conjugacy} using data $s_{0:M}$ with $M=0$ (so the posterior coincides with the prior), $M=100$, or $M=1,000$ (so whole simulated path is used). For comparison, the figure also reports the data-generating distribution of the holding-times of the state 2, i.e. of the time elapsed until either the tank is repaired or the factory has to stop after a failure. 

Figure \ref{fig:post} highlights how the posterior distribution obtained from the semi-Markov beta-Stacy prior is able to recover the underlying data-generating distribution by flexibly adapting to the observations, even when these deviate from prior assumptions. This is true both for data reflecting a short ($M=100$) or long ($M=1,000$) period of observation. The figure also highlights the impact of the concentration parameters $c$. As this increases, the dispersion of the distribution of $F^2(\cdot)$ around its mean decreases.

\begin{figure}
\centering
\includegraphics[scale=0.6]{./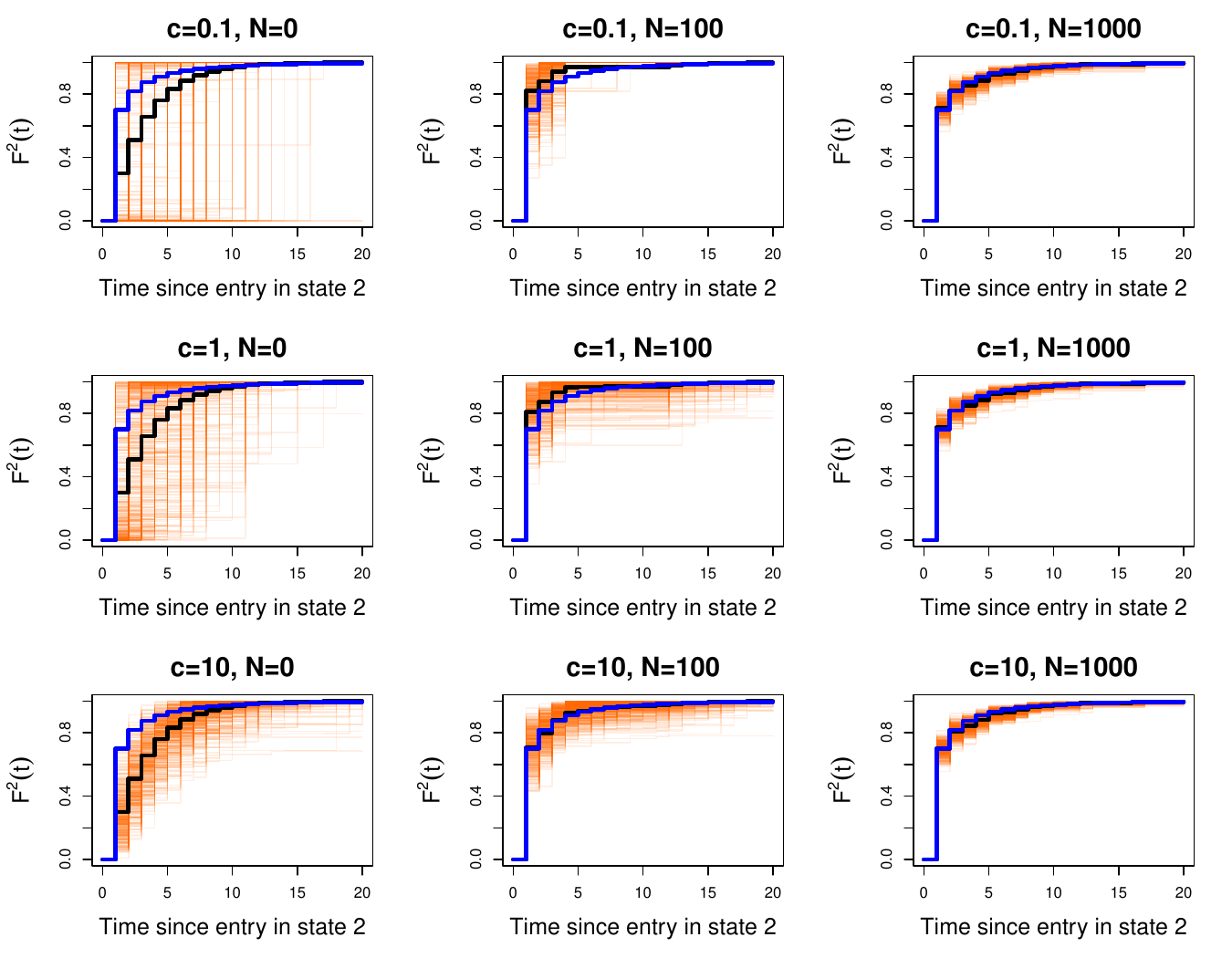}
\caption{Plot of the posterior distribution of $F^{2}(\cdot)$ for the semi-Markov process priors of Section \ref{sec:applications}. Results are shown for different values of: i) the prior concentration parameter $c$, which specifies the weight assigned to the prior centering distributions $F_0^2(\cdot)$; ii) the length $M$ of the observation period during which data $S_{0:M}$ is collected (if $M=0$, the posterior distribution coincides with the prior). Blue lines: true data-generating distribution $F^2(\cdot)$ (see Section \ref{sec:dgd}). Black lines: posterior mean of $F^2(\cdot)$. Orange lines: graph of 500 samples from the posterior distribution of $F^2(\cdot)$.}
\label{fig:post}
\end{figure}

\subsection{Predictive distributions}

Figure \ref{fig:pred} reports the estimates of the predictive distributions $P_h(j)=\mathbb{P}(S_{1,000+h}=j|S_{0:1,000}=s_{0:1,1000})$ obtained from the semi-Markov beta-Stacy prior with $c=1$ for all $h=1,\ldots,100$ and all $j=1,2,3$. These were obtained by simulating  $10^5$ future paths $(S_{1,000+h})_{h=1,\ldots,100}$ given the past observation of $S_{0:1,000}=s_{0:1,000}$ by sampling from the reinforced semi-Markov kernels of Corollary \ref{corollary:preddist}. Then, $P_h(j)$ was estimated as the proportion of simulations in which $S_{1,000+h}=j$.

Figure \ref{fig:pred} shows how the the $P_h(j)$ adapt over time as $h$ increases for all $j=1,2,3$, whose values stabilize in the long run. Specifically, for large $h$ the vector $(P_h(1)$, $P_h(2)$, $P_h(3))$ remain close to the limiting distribution $(\nu_1,\nu_2,\nu_3)$ of the data-generating semi-Markov process. This is obtained from Proposition 3.9 of \citet{Barbu2009} as $\nu_j=e_j m_j/ \sum_{i=1}^3 e_i m_i$, where $(e_1,e_2,e_3)=(\frac{1}{2.05},\frac{1}{2.05},\frac{0.05}{2.05})$ is the equilibrium distribution of $\mathbf{P}$, while $m_j=\sum_{t=0}^{+\infty}(1-F^j(t))$ is the expected sojourn time in the state $j$. 

\begin{figure}
\centering
\includegraphics[scale=0.6]{./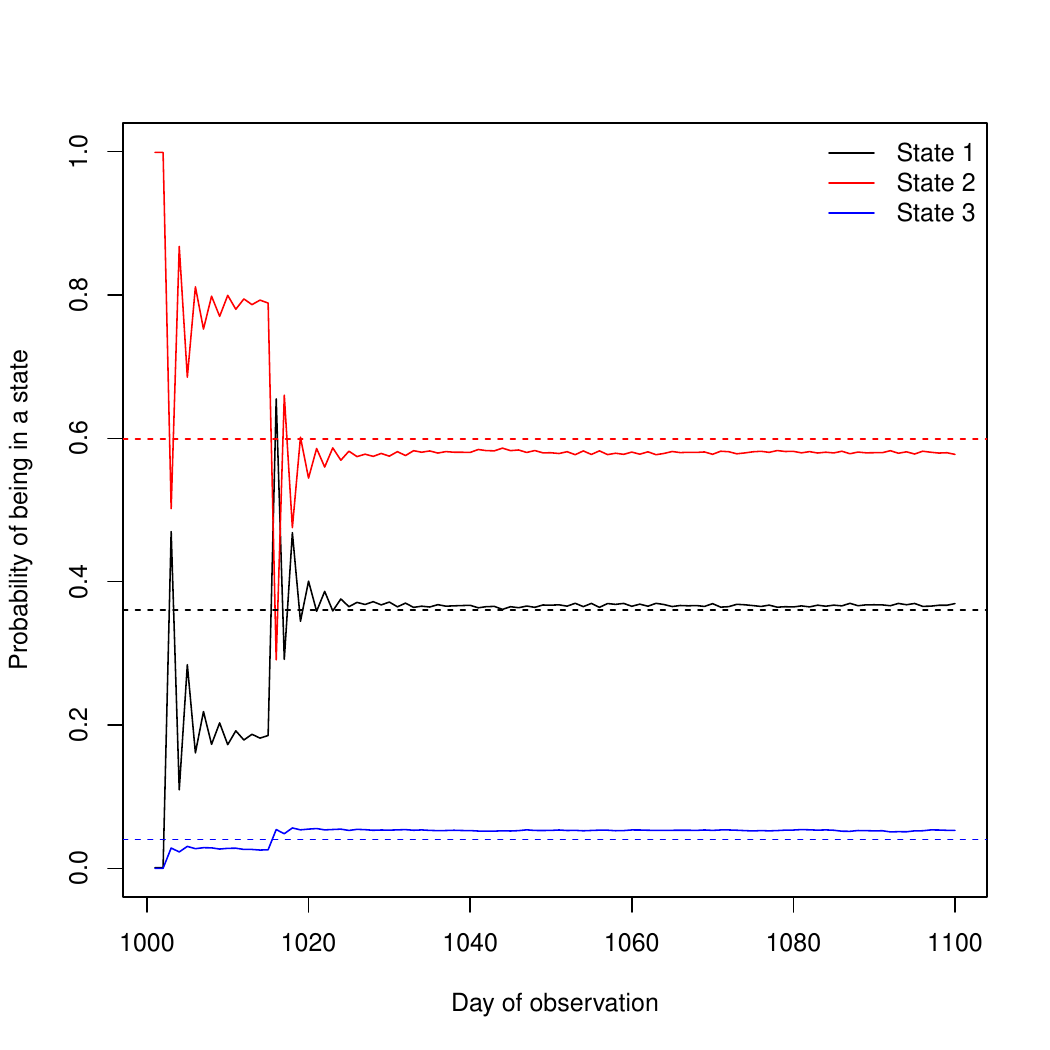}
\caption{Plot of the predictive probabilities $P_h(j)=\mathbb{P}(S_{1,000+h}=j|S_{0:1,000}=s_{0:1,1000})$ obtained from the semi-Markov beta-Stacy process of Section \ref{sec:applications} with $c=1$ for all $h=1,\ldots,100$. The value $P_h(j)$ is the probability that the factory will be in state $j=1,2,3$ after $h$ days in the future given its past history $S_{0:1000}$. The black, red, and blue lines are, respectively, $P_h(1)$, $P_h(2)$, and $P_h(3)$. The dashed lines represent the limiting distribution of the underlying data-generating semi-Markov process.} 
\label{fig:pred}
\end{figure}

\subsection{Robustness comparison}\label{sec:rob}

In additional simulations, we compare our non-parametric model of Section \ref{sec:priornp} with a parametric specification. The alternative model was defined as follows: for the transition matrix $\mathbf{P}$, we specify the same prior distribution as in Section \ref{sec:priornp}; for each $j=1,2,3$, we assume that $F^j(\cdot)$ is the geometric distribution with parameter $p_j$; each $p_j$ has an independent prior distribution uniform on $(0,1)$. Both the parametric model for $F^2(\cdot)$ and the non-parametric centering law $F^2_0(\cdot)$ have a different functional form than $F^2_\textrm{true}(\cdot)$. 

We iterated the following steps 10,000 times: i) we simulated a realization $s_{0:M}$ of the semi-Markov process of Section \ref{sec:dgd}; we considered $M=10, 100, 1,000,$ and $10,000$ separately; ii) for both models, we computed the posterior mean $\widehat{F}^2(\cdot)$ of $F^2(\cdot)$ given $s_{0:M}$, i.e. the Bayesian estimate of $F^2_\textrm{true}(\cdot)$ under the squared-error loss; iii) we computed the Kolmogorov-Smirnov distance $\max_{t=0,\ldots,20}\left|\widehat{F}^2(t) - F^2_\textrm{true}(t)\right|$ between $\widehat{F}^2(\cdot)$ and $F^2_\textrm{true}(\cdot)$. 

Figure \ref{fig:param} reports the distribution of the Kolmogorv-Smirnov metrics obtained in the simulations. Using our non-parametric approach, the distribution of the Kolmogorov-Smirnov distances concentrate around zero as the number of observations $M$ increases. This suggests that the semi-Markov beta-Stacy posterior mean can approach the true $F^2_\textrm{true}(\cdot)$ even though its centering $F^2_0(\cdot)$ is misspecified. On the other hand, the posterior mean of the misspecified parametric model does not seem to approach $F^2_\textrm{true}(\cdot)$, as the Kolmogorov-Smirnov metrics tend to concentrate away from zero even for large $M$.  

\begin{figure}
\centering
\includegraphics[scale=1]{./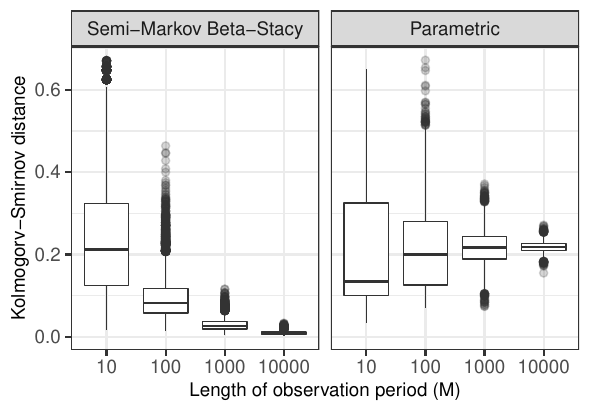}
\caption{Summary box plots of the Kolmogorv-Smirnov metrics obtained in the simulations of Section \ref{sec:rob}. Left panel: non-parametric semi-Markov beta-Stacy model. Right panel: parametric model based on the geometric distribution.}
\label{fig:param}
\end{figure}

\section{Concluding remarks} \label{sec:discussion}

We introduced the semi-Markov beta-Stacy process, a Bayesian nonparametric process prior for semi-Markov models. We characterized it from a predictive perspective by ``piecing together'' different reinforced urn models,  each characterizing simpler processes. This approach provides a fresh strategy for the specification of Bayesian nonparametric models for complex processes \citep{Muliere2000, Muliere2003}. 

The semi-Markov beta-Stacy is amenable to several generalizations. In Definition \ref{def:renewal}, the holding time $T_k$ depends only on the current state $L_k$ and not on the following state $L_{k+1}$. More generally, $T_k$ may depend on both $L_k$ and $L_{k+1}$ \citep{Barbu2009}. This can be represented by substituting the distribution $F^i$ in Definition \ref{def:renewal} with one of the form $F^{i,j}$ and letting $\mathbf{F}=(F^{i,j}(\cdot):i,j\in E, i\neq j)$. \citet{Arfe2020} generalizes our definitions and results to this case.

In addition, in the urn-based construction of Section \ref{sec:urns}, each extracted ball may be reinforced by a fixed or random amount of balls of the same or different colors \citet{Muliere2006}. This could allow a finer control of the level of uncertainty attached to the urns' initial composition, i.e. to the prior centering distribution \citep{Arfe2018a}. 

To induce dependence across components of the prior,  urns other than the sampled one could be reinforced as well. This interaction among urns could lead to interesting models, in which observations provide indirect information about distributions that have not generated them \citep{Paganoni2004,Muliere2005}.

We are studying a regression model in which the distribution of the holding times and the transition matrices depend on a vector of covariates. As in \citet{Arfe2018a}, this is done by letting the initial composition of the urns be a function of the covariates and some additional parameters. Such model could be used for the analysis of multi-stage diseases in medical studies \citep{Barbu2004, Mitchell2011}. 

Finally, we are applying the semi-Markov beta-Stacy process to perform inference and predictions in Hidden Semi-Markov Models (HSMMs). In these models, the sequence of visited states is observed only indirectly \citep[Chapter 6]{Barbu2009}. As a specific application, we are developing a novel approach for changepoint analysis in which the state of a semi-Markov process represents the latent regimen of a time series \citep{Smith1975,Muliere1985,Ko2015,Peluso2018}. 

\section*{Acknowledgments}
The Authors have no conflicts of interest to declare. AA would like to thank Sarah Craver for her helpful suggestions.

\bibliography{bibliography}
\bibliographystyle{rss}

\end{document}